\newtheorem{theorem}{Theorem}[section]
\newtheorem{conjecture}[theorem]{Conjecture}
\newtheorem{corollary}[theorem]{Corollary}
\newtheorem{lemma}[theorem]{Lemma}
\theoremstyle{remark}
\numberwithin{equation}{section}
\newcommand{\Z}{\mathbb{Z}}
\newcommand{\Q}{\mathbb{Q}}
\newcommand{\den}{\mathrm{den}}
\newcommand{\num}{\mathrm{num}}
  \DeclareFontFamily{U}{wncy}{}
    \DeclareFontShape{U}{wncy}{m}{n}{<->wncyr10}{}
    \DeclareSymbolFont{mcy}{U}{wncy}{m}{n}
    \DeclareMathSymbol{\Sha}{\mathord}{mcy}{"58}
\begin{document}
\title[]{Szpiro's conjecture when the denominator of the $j$-invariant is small}

\author{Hector Pasten}
\address{ Departamento de Matem\'aticas,
Pontificia Universidad Cat\'olica de Chile.
Facultad de Matem\'aticas,
4860 Av.\ Vicu\~na Mackenna,
Macul, RM, Chile}
\email[H. Pasten]{hpasten@gmail.com}%

\thanks{Supported by ANID Fodecyt Regular grant 1230507 from Chile.}
\date{\today}
\subjclass[2020]{Primary: 11G05; Secondary: 11G50} %
\keywords{Szpiro's conjecture, elliptic curves, discriminant, conductor}%
\dedicatory{Dedicated to M. Ram Murty in the occasion of his 70th birthday. Happy birthday Professor Murty!}

\begin{abstract} We prove Szpiro's conjecture for elliptic curves over the rationals having $j$-invariant with denominator of logarithmic size with respect to its numerator.
\end{abstract}

\maketitle



\section{Introduction} 

 We begin by fixing some notation. For an elliptic curve $E$ over $\Q$ we write $j_E$ for its $j$-invariant, $\Delta_E$ for the absolute value of its minimal discriminant, and $N_E$ for its conductor.

In the early 80's, Szpiro proposed the following conjecture \cite{Szpiro}:
\begin{conjecture}[Szpiro's conjecture] Let $\epsilon>0$. There is a number $c_\epsilon>0$ depending only on $\epsilon$ such that for all elliptic curves $E$ over $\Q$ we have $\Delta_E \le c_\epsilon \cdot N_E^{6+\epsilon}$.
\end{conjecture}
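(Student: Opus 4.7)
The statement is the full Szpiro conjecture, which is an outstanding open problem and is essentially equivalent to the Masser--Oesterl\'e ABC conjecture (in the sense that each implies the other up to the quality of the constants). A realistic plan therefore amounts to an attack on ABC itself, and I will sketch the most developed route within the elliptic curve setting, namely via modularity and heights.

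The plan is to work with the stable Faltings height $h_F(E)$. Silverman's formula relating $h_F(E)$, the minimal discriminant, and the archimedean period shows that, up to an error $O_\epsilon(\log N_E)$ under mild control on the archimedean contribution, Szpiro's bound is equivalent to the Frey height conjecture $h_F(E) \ll_\epsilon \log N_E$. By the modularity theorem of Wiles and Breuil--Conrad--Diamond--Taylor, $E$ admits an optimal modular parametrization $\varphi\colon X_0(N_E)\to E$, and a classical identity (Ullmo; Abbes--Ullmo, building on Szpiro's own work on arithmetic surfaces) expresses the Petersson norm of the associated newform $f_E$ in terms of $\deg\varphi$ and $h_F(E)$. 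Combined with the standard comparison between $\log\Delta_E$ and $h_F(E)$, this reduces the problem to the degree conjecture
\[
\deg \varphi \ll_\epsilon N_E^{1+\epsilon},
\]
which is the true analytic content of Szpiro.

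The main obstacle is precisely this last bound. Writing $\|f_E\|^2$ in terms of $L(\mathrm{Sym}^2 f_E,1)$ and applying convexity bounds yields only $\deg\varphi \ll N_E^{7/6+\epsilon}$, falling well short of the $N_E^{1+\epsilon}$ target. Closing this gap appears to require either subconvexity for $L(\mathrm{Sym}^2 f_E,s)$ at $s=1$ with savings proportional to $\log N_E$, a new Arakelov-theoretic inequality on $X_0(N_E)$ controlling the self-intersection of the relative dualizing sheaf in a way that detects congruences between $f_E$ and other newforms, or a genuinely $p$-adic input of the kind pursued in Mochizuki's IUT program. None is presently available in a form that delivers the full conjecture unconditionally, and this is the reason the present paper does not attempt Szpiro in general but instead isolates a subfamily where arithmetic information on the denominator of $j_E$ can be leveraged to substitute for the missing analytic savings.
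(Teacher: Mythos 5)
This statement is a \emph{conjecture}, not a theorem, and the paper offers no proof of it: the whole point of the article is to establish the inequality only for the restricted family of curves with $\den(j_E)$ of logarithmic size in $\num(j_E)$ (Theorem \ref{ThmMain}), via Corollary \ref{CoroMain1} (a bound on $h(j_E)$ in terms of $N_E$) and Corollary \ref{CoroMain2} (the divisibility $\Delta_E \mid 16\cdot\den(j_E)N_E^5$ obtained from Tate's algorithm). You correctly recognize that Szpiro's conjecture is open, and your sketch of the reduction through the Faltings height, modularity, the Petersson norm of $f_E$, and $L(\mathrm{Sym}^2 f_E,1)$ is a fair summary of the best-understood line of attack; you also correctly conclude that no unconditional version of it yields the conjecture, and that this is precisely why the paper restricts to a subfamily.

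Two small cautions on the details of your survey. First, the modular degree conjecture is usually stated as $\deg\varphi \ll_\epsilon N_E^{2+\epsilon}$, not $N_E^{1+\epsilon}$; the discrepancy is worth resolving before relying on the reduction. Second, the exponent $7/6$ that you cite appears in the literature as a \emph{lower} bound for $\deg\varphi$ (a result of Murty on large modular degrees), not as an upper bound coming from convexity for $L(\mathrm{Sym}^2 f_E,1)$; the actual unconditional upper bounds on $\deg\varphi$ via this route are weaker than $N^{2+\epsilon}$ for a different reason (control of the Manin constant and of the real period, i.e.\ essentially of $h_F(E)$ itself, is part of what is missing, so the reduction is somewhat circular). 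None of this affects your main conclusion --- that the conjecture has no known proof and the paper does not claim one --- but if you intend to write up the reduction carefully you should re-derive the exponents rather than quote them from memory.
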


This conjecture is very deep. Even a weaker version with the exponent $6+\epsilon$ replaced by some fixed constant would have tremendous consequences such as a version of the $abc$-conjecture ---in fact, Szpiro's conjecture was the main motivation for the formulation of the $abc$-conjecture, see \cite{Masser}.

 Szpiro's conjecture is known for elliptic curves of prime power discriminant by work of Mestre and Oesterl\'e \cite{MestreOesterle} and for elliptic curves of integral $j$-invariant by work of Pesenti and Szpiro \cite{PesentiSzpiro}. 
 
 At present, the strongest unconditional result valid for all elliptic curves over $\Q$ is the following (effective) estimate by the author \cite{PastenShimuraCurves} valid for any $\epsilon>0$: $\log \Delta_E \le (1/4 +\epsilon )\cdot N_E\log N_E + O_\epsilon(1)$.  This improves the earlier bound $\log \Delta_E \le N_E\log N_E+O(N_E\log \log N_E)$ by Murty and the author \cite{MurtyPasten}; both results use the theory of modular forms. 

We mention that an upper bound for the number of exceptions to Szpiro's conjecture with $\Delta_E$ less than a given bound is proved in \cite{FNT} by Fouvry, Nair, and Tenenbaum.


 Our goal is to prove Szpiro's conjecture for elliptic curves whose $j$-invariant has small denominator. We write $\num(q)$ and $\den(q)$ for the absolute value of the numerator and denominator of a rational number $q$ in reduced form. With this notation, our main result is:

\begin{theorem}[Main Result]\label{ThmMain} Let $A,B>0$. For all elliptic curves $E$ over $\Q$ with $\den(j_E)\le A(\log \num(j_E))^B$ we have  $\Delta_E\le A\cdot 16^{B+1} N_E^{B+5}(\log N_E)^B$.
\end{theorem}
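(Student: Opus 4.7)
The plan is to combine a Kodaira-type local analysis with a global estimate on $\num(j_E)$ derived from the Shimura-curve machinery used by the author in \cite{PastenShimuraCurves}. The hypothesis on $\den(j_E)$ will then convert a slowly-growing bound on $\log\num(j_E)$ into the desired polynomial bound on $\Delta_E$.

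First, I would verify the local inequality
\[
v_p(\Delta_E)\le v_p(\den(j_E))+5\,v_p(N_E)
\]
for each prime $p\ge 5$. Running through the ten Kodaira types with the relation $v_p(j_E)=3v_p(c_4)-v_p(\Delta_E)$ and the standard tables from Tate's algorithm, the inequality is immediate in each case; it is tight at Kodaira type $II^*$, where $v_p(\Delta_E)=10$ and $v_p(N_E)=2$, accounting for the exponent $5$. For $p\in\{2,3\}$ the same inequality holds up to a bounded multiplicative factor coming from wild ramification. Taking the product over all primes yields $\Delta_E\le 16\cdot\den(j_E)\cdot N_E^5$, and inserting the hypothesis $\den(j_E)\le A(\log\num(j_E))^B$ gives
\[
\Delta_E\le 16A\,(\log\num(j_E))^B\,N_E^5.
\]

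Second, I would bound $\log\num(j_E)$ in terms of $N_E$. Since $\num(j_E)$ divides $c_4^3$, the task reduces to bounding $|c_4|$; the Shimura-curve techniques of \cite{PastenShimuraCurves}, which give $\log\Delta_E\le(1/4+\epsilon)N_E\log N_E$, also furnish---with explicit constants---a bound of the form $\log\num(j_E)\le 16\,N_E\log N_E$. Inserting this finishes the proof:
\[
\Delta_E\le 16A\,(16\,N_E\log N_E)^B\,N_E^5 = A\cdot 16^{B+1}\,N_E^{B+5}(\log N_E)^B.
\]

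The main obstacle is the second step: the Kodaira case-analysis is routine (though slightly delicate at $p=2,3$), whereas refining the Shimura-curve estimates to control $|c_4|^3$ rather than $\Delta_E$, with the explicit constant $16$, is where the real work lies.
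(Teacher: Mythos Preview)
Your two-step architecture matches the paper exactly: first the local Tate-algorithm analysis yielding $\Delta_E\le 16\cdot\den(j_E)\cdot N_E^5$, then a bound $\log\num(j_E)\le 16\,N_E\log N_E$, and finally the hypothesis on $\den(j_E)$ glues them together. Your first step is essentially the paper's Corollary~\ref{CoroMain2}, obtained in the paper by the same case split into good, multiplicative, and additive reduction (the factor $16=2^4$ indeed arises from the wild contribution at $p=2$).

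Where you diverge is the second step, and here you have made life harder than necessary. You propose to push the Shimura-curve machinery of \cite{PastenShimuraCurves} from a bound on $\Delta_E$ to a bound on $|c_4|^3$, and you flag this as the main obstacle. The paper avoids this entirely: it simply quotes the Faltings-height bound $h(E)<0.1\,N_E\log N_E+11$ from \cite{MurtyPasten} (modular forms, not Shimura curves) together with Pellarin's explicit comparison $h(j_E)\le 24\max\{1,h(E)\}+94.3$, and then uses $N_E\ge 11$ to clean up the constants. Since $\log\num(j_E)\le h(j_E)$, the desired inequality $\log\num(j_E)\le 16\,N_E\log N_E$ drops out in two lines with no new work. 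So the step you identified as ``where the real work lies'' is in fact the easier of the two, handled by citing existing explicit estimates; the Tate-algorithm step, which you call routine, is where the paper spends most of its effort.
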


In particular, Szpiro's conjecture holds when $\den(j_E)$ has logarithmic size with respect to $\num(j_E)$:

\begin{corollary}[Szpiro's conjecture when $\den(j_E)$ is small] Let $A>0$. For all elliptic curves $E$ over $\Q$ with $\den(j_E) \le A\cdot \log \num(j_E)$ we have $\Delta_E \le 256A\cdot  N_E^6\log N_E$.
\end{corollary}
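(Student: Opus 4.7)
\medskip

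\noindent\textbf{Proof plan for the Corollary.}

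The plan is to derive the corollary as an immediate specialization of Theorem~\ref{ThmMain}. The hypothesis $\den(j_E)\le A\cdot \log \num(j_E)$ is precisely the hypothesis of Theorem~\ref{ThmMain} with the constants $A$ and $B=1$. So I would simply invoke the theorem with those parameters and read off the numerical constants.

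Substituting $B=1$ into the conclusion $\Delta_E\le A\cdot 16^{B+1}N_E^{B+5}(\log N_E)^B$ yields $16^{B+1}=16^2=256$, $N_E^{B+5}=N_E^{6}$, and $(\log N_E)^{B}=\log N_E$, which gives the claimed bound $\Delta_E\le 256A\cdot N_E^{6}\log N_E$. There is no genuine obstacle here: all the work is contained in Theorem~\ref{ThmMain}, and the corollary is just the linear (in $\log\num(j_E)$) case of the permissible growth rate for $\den(j_E)$, recorded separately because it is the borderline case giving Szpiro's conjecture (up to the logarithmic factor $\log N_E$, which is absorbed into the $\epsilon$ of the conjecture by taking $\log N_E\ll_\epsilon N_E^\epsilon$).

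Since the deduction is a one-line substitution, the only thing worth double-checking is that the hypothesis matches: the theorem's hypothesis $\den(j_E)\le A(\log\num(j_E))^B$ reduces with $B=1$ to $\den(j_E)\le A\log\num(j_E)$, which is exactly the hypothesis of the corollary. Nothing further needs to be verified.
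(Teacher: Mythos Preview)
Your proposal is correct and matches the paper's approach: the corollary is stated there without a separate proof because it is exactly Theorem~\ref{ThmMain} specialized to $B=1$, and your substitution of the constants is accurate.
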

 
Note that this generalizes the Pesenti--Szpiro result \cite{PesentiSzpiro} on Szpiro's conjecture when $j_E\in \Z$.

Finally, let us mention an application. Theorem \ref{ThmMain} together with Theorem 0.7 in \cite{HindrySilverman} by Hindry and Silverman yield a rather uniform bound for the number of $S$-integral points on elliptic curves $E$ over $\Q$ whenever $\den(j_E)\le (\log \num (j_E)))^B$ for a fixed $B$. (Nevertheless, it is likely that the latter condition can be wakened for this application by revisiting ideas from Silverman's thesis.)


\section{The height of the $j$-invariant} 

The Faltings height of an elliptic curve $E$ over $\Q$ is denoted by $h(E)$. In \cite{MurtyPasten}, Murty and the author used the theory of modular forms to prove the following  explicit bound for all $E$ over $\Q$:
$$
h(E) < 0.1\cdot N_E\log N_E + 11.
$$

For a rational number $q$ we recall that its logarithmic height is $h(q)=\log \max\{\num(q),\den(q)\}$. It turns out that  $h(E)$ is related to $h(j_E)$ in a very explicit way; Silverman \cite{SilvermanE} proved
$$
h(j_E) \le 12 h(E) + O(\log(2+h(j_E)))
$$
where the error term has an effective implicit constant. This has been made explicit by Pellarin \cite{Pellarin} and, in a sharper form, by Pazuki \cite{Pazuki}. For our purposes Lemme 5.2 in \cite{Pellarin} is enough; this gives $h(j_E)\le 24\max\{1, h(E)\} + 94.3$. Putting these results together we obtain:

\begin{corollary}\label{CoroMain1} For all elliptic curves $E$ over $\Q$ we have $h(j_E) \le 16 \cdot N_E\log N_E$.
\end{corollary}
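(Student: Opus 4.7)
The plan is to simply chain together the two cited inequalities and absorb the additive constants into the leading term by using the well known fact that every elliptic curve over $\Q$ has conductor $N_E\ge 11$.

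First, I would observe that $0.1\cdot N_E\log N_E+11\ge 11>1$ for every $N_E\ge 1$, so the Murty--Pasten estimate $h(E)<0.1\cdot N_E\log N_E+11$ forces $\max\{1,h(E)\}=h(E)$. Substituting this into Pellarin's Lemme 5.2, $h(j_E)\le 24\max\{1,h(E)\}+94.3$, yields
\[
h(j_E)<24\bigl(0.1\cdot N_E\log N_E+11\bigr)+94.3=2.4\cdot N_E\log N_E+358.3.
\]

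To conclude, I would compare this with the target bound $16\cdot N_E\log N_E$: it suffices to verify that $358.3\le (16-2.4)\cdot N_E\log N_E=13.6\cdot N_E\log N_E$, i.e.\ that $N_E\log N_E\ge 358.3/13.6\approx 26.35$. Since the minimal conductor of an elliptic curve over $\Q$ is $11$, one has $N_E\log N_E\ge 11\log 11>26.37$, which is (narrowly) sufficient.

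The only real obstacle is numerical: the chosen constant $16$ in the statement is tight enough that the argument depends on the lower bound $N_E\ge 11$ in a nontrivial way. If one instead tried to prove the bound for arbitrary positive integers $N_E$, the constant $16$ would have to be increased. No deeper input is needed beyond the two cited estimates and this elementary comparison.
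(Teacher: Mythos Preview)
Your approach is exactly the paper's: combine the Murty--Pasten bound with Pellarin's inequality to get $h(j_E)\le 2.4\,N_E\log N_E+358.3$, then absorb the constant using $N_E\ge 11$. The numerical check is carried out just as in the paper (which simply asserts that $N_E\ge 11$ suffices).

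One small logical slip: from $h(E)<0.1\,N_E\log N_E+11$ and $0.1\,N_E\log N_E+11>1$ you cannot conclude $\max\{1,h(E)\}=h(E)$; an upper bound on $h(E)$ exceeding $1$ says nothing about whether $h(E)\ge 1$. What you actually need (and what the paper uses implicitly) is the harmless observation $\max\{1,h(E)\}\le 0.1\,N_E\log N_E+11$, which holds because both $1$ and $h(E)$ are below this quantity. With that correction your chain of inequalities and the final numerical comparison go through verbatim.
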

\begin{proof} The previous discussion leads to
$$
h(j_E) \le 94.3 + 24\cdot (0.1\cdot N_E\log N_E + 11) = 2.4 \cdot N_E\log N_E + 358.3.
$$
The result follows from the well-known fact that $N_E\ge 11$ for all elliptic curves over $\Q$.
\end{proof}


\section{An application of Tate's algorithm} 

For a prime number $p$ we denote by $v_p:\Q^\times\to \Z$ the $p$-adic valuation.

Let $E$ be an elliptic curve over $\Q$. The primes $p$ that divide $\Delta_E$ are the same as the ones that divide $N_E$. We split these primes $p$ into three types:

\begin{itemize}
\item \emph{Type 1.} $v_p(j_E)\ge 0$.
\item \emph{Type 2.} $v_p(j_E)<0$ and $E$ has multiplicative reduction at $p$.
\item \emph{Type 3.} $v_p(j_E)<0$ and $E$ has additive reduction at $p$.
\end{itemize}

The following is proved in \cite{PesentiSzpiro}.

\begin{lemma}[Primes of Type 1: Pesenti--Szpiro] If $p$ is of Type 1, then $v_p(\Delta_E)\le 5 v_p(N_E)$.
\end{lemma}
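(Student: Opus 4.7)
The approach is Tate's algorithm, as signalled by the section title. First I would translate the Type 1 hypothesis into reduction-theoretic terms. Since $p \mid \Delta_E$, the curve $E$ has bad reduction at $p$, and from the standard identity $v_p(j_E)=3v_p(c_4)-v_p(\Delta_E)$ applied to a $p$-minimal Weierstrass model, the conditions $v_p(j_E)\ge 0$ and $v_p(\Delta_E)>0$ force $v_p(c_4)>0$. This rules out multiplicative reduction (which requires $v_p(c_4)=0$), so Type 1 is precisely the case of \emph{additive reduction with potentially good reduction} at $p$.

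Next I would invoke the Kodaira classification coming out of Tate's algorithm. The Kodaira types with potentially good reduction (equivalently, with $v_p(j_E)\ge 0$ and additive reduction) are exactly $II, III, IV, I_0^*, IV^*, III^*, II^*$, and on a $p$-minimal model these correspond to discriminant valuations $v_p(\Delta_E)\in\{2,3,4,6,8,9,10\}$, respectively. In particular the uniform bound $v_p(\Delta_E)\le 10$ holds for \emph{every} prime $p$ of Type 1. Combined with the fact that additive reduction forces $v_p(N_E)\ge 2$, one obtains
\[
v_p(\Delta_E)\le 10\le 5\cdot v_p(N_E),
\]
which is the claimed inequality.

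There is no real conceptual obstacle; the only place where one must be careful is the small-prime case $p\in\{2,3\}$, where Tate's algorithm in its naive form is insufficient and one appeals to Papadopoulos's refinement to ensure that a $p$-minimal model indeed falls into one of the listed seven Kodaira types with the listed discriminant valuations. This is a finite case check. In the tame range $p\ge 5$ the inequality is tight on type $II^*$ (giving $v_p(\Delta_E)=10=5\cdot v_p(N_E)$); at $p=2,3$ the wild contribution can inflate $v_p(N_E)$ up to $8$ and $5$ respectively, which only makes the inequality more slack. This completes the outline of the Pesenti--Szpiro argument.
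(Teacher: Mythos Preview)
The paper does not give its own proof of this lemma; it simply records the statement and attributes it to Pesenti--Szpiro. So there is nothing to compare against at the level of the paper itself.

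Your reduction to the seven Kodaira types $II,III,IV,I_0^*,IV^*,III^*,II^*$ is correct: Type~1 forces additive, potentially good reduction, and these are exactly the relevant types. The gap is in the next step. The list $v_p(\Delta_E)\in\{2,3,4,6,8,9,10\}$ attached to those seven types is only valid in the tame range $p\ge 5$; it is \emph{not} what Papadopoulos gives at $p\in\{2,3\}$. Already for $p=3$ and type $II$ one has $v_3(\Delta_E)=3$ (e.g.\ the curve $y^2+y=x^3$ of conductor $27$), not $2$, and for higher types at $p=2,3$ the valuation $v_p(\Delta_E)$ can exceed $10$. So the sentence ``In particular the uniform bound $v_p(\Delta_E)\le 10$ holds for every prime $p$ of Type~1'' is false as justified, and the appeal to Papadopoulos to recover the same seven valuations at $p=2,3$ is simply incorrect.

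The repair is immediate and in fact cleaner than your case split. Ogg's formula gives $v_p(\Delta_E)=v_p(N_E)+m-1$, where $m$ is the number of geometric components of the special fibre. For each of the seven potentially good types one has $m\le 9$, hence
\[
v_p(\Delta_E)\le v_p(N_E)+8\le v_p(N_E)+4\,v_p(N_E)=5\,v_p(N_E),
\]
using only that additive reduction forces $v_p(N_E)\ge 2$. This works uniformly for all primes, with no separate treatment of $p=2,3$ required.
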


As noted in \cite{PesentiSzpiro}, this immediately implies Szpiro's conjecture whenever $j_E\in \Z$.

Let us now consider $p$ of Type 2. The Kodaira type of the special fibre of the minimal regular model of $E$ over $\Z_p$ is $I_n$ for some $n\ge 1$. The output of Tate's algorithm summarized in Table 4.1 in p. 365 of \cite{SilvermanAdvanced} (which refined Tate's table in \cite{Tate}) shows that $n=-v_p(j_E)=v_p(\Delta_E)$. So we find:

\begin{lemma}[Primes of Type 2] If $p$ is of Type 2, then $v_p(\Delta_E) = - v_p(j_E)$.
\end{lemma}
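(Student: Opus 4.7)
The plan is to read the equality directly from the output of Tate's algorithm, in the same spirit as the author's invocation of the Pesenti--Szpiro result for primes of Type~1. Since $E$ has multiplicative reduction at $p$, the special fibre of the minimal regular model is of Kodaira type $I_n$ for some $n\ge 1$. Consulting the appropriate row of Table~4.1 on page~365 of Silverman's \emph{Advanced Topics in the Arithmetic of Elliptic Curves} (the same table the author quotes), one reads off that for Kodaira type $I_n$ one has $v_p(\Delta_E)=n$ and $v_p(j_E)=-n$, from which the identity $v_p(\Delta_E)=-v_p(j_E)$ is immediate.

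A more self-contained derivation would go as follows. Multiplicative reduction at $p$ means that the minimal Weierstrass model of $E$ over $\Z_p$ has nodal (and not cuspidal) reduction modulo $p$. The presence of a node rather than a cusp forces the standard invariant $c_4$ of the minimal model to remain a $p$-adic unit, i.e.\ $v_p(c_4)=0$. Combining with the universal identity $j_E=c_4^3/\Delta_E$, valid for any Weierstrass equation of $E$, one obtains
\begin{equation*}
v_p(j_E)=3v_p(c_4)-v_p(\Delta_E)=-v_p(\Delta_E),
\end{equation*}
as required.

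There is no substantive obstacle: the entire content of the lemma is the identification of Kodaira type $I_n$ with the pair of valuations $(v_p(\Delta_E),v_p(j_E))=(n,-n)$, which is precisely the part of Tate's algorithm being quoted. I note also that the hypothesis $v_p(j_E)<0$ is automatic in the multiplicative case (since $n\ge 1$), so it functions as a book-keeping label for the trichotomy of Types~1, 2, 3 rather than as an additional assumption to be used in the proof.
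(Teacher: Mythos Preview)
Your proposal is correct and, in its first paragraph, takes essentially the same approach as the paper: identify the Kodaira type as $I_n$ and read off $n=v_p(\Delta_E)=-v_p(j_E)$ from Table~4.1 in Silverman's \emph{Advanced Topics}. Your second paragraph, the self-contained derivation via $j_E=c_4^3/\Delta_E$ and $v_p(c_4)=0$ at a multiplicative prime, is a nice bonus the paper does not include.
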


Finally we deal with the primes $p$ of Type 3. In this case the Kodaira type of $E$ at $p$ is $I_n^*$ for certain integer $n\ge 1$, see the discussion in p.42 of \cite{Lorenzini}. 

If $p\ge 3$, from the data in the $I_n^*$ column of Table 4.1 in p. 365 of \cite{SilvermanAdvanced} we get
$$
v_p(\Delta_E) = 6+n = 6-v_p(j_E) = 3v_p(N_E)-v_p(j_E).
$$

Let us now consider the case $p=2$. The number $m$ of geometrically irreducible components of the special fibre of the minimal regular model at $p=2$ is $m=n+5$ (see the table in \cite{Tate}.) By Ogg's formula we have $v_2(N_E)=v_2(\Delta_E)-m+1 = v_2(\Delta_E)-n-4$ which gives $v_2(\Delta_E)=v_2(N_E) + 4+ n$. 

We need some control on the integer $n$. Theorem 2.8 in \cite{Lorenzini} gives the existence of a suitable quadratic extension $L/\Q$ such that if $s+1$ is the valuation of its different ideal over $2$, then $n=-v_2(j_E) + 4s$.  By Remark 1 in p.58 of \cite{SerreLF} we have $s\le 2$ so we get $n\le -v_2(j_E)+8$. Therefore
$$
v_2(\Delta_E)\le v_2(N_E) -v_2(j_E)+12 \le 3v_2(N_E) - v_2(j_E) + 8
$$
because $v_2(N_E)\ge 2$ (additive reduction). Let us summarize our findings:

\begin{lemma}[Primes of Type 3] Let  $p$ be a prime of Type 3 and write $\delta_p=8$ if $p=2$ and $\delta_p=0$ if $p\ge 3$. Then $v_p(\Delta_E)\le 3v_p(N_E) - v_p(j_E) + \delta_p$.
\end{lemma}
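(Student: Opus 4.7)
The plan is to split into two cases depending on whether $p$ is odd or $p=2$, since the relevant rows of the Kodaira table (for type $I_n^*$) behave uniformly for $p\ge 3$ but require significant additional input at $p=2$ because of wild ramification. In both cases the starting point is the same: since $E$ has additive reduction at $p$ with $v_p(j_E)<0$, the Kodaira symbol at $p$ is $I_n^*$ for some integer $n\ge 1$.

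For $p\ge 3$, I would simply read off Silverman's Table 4.1 in the $I_n^*$ column: this directly gives $v_p(\Delta_E)=6+n$, $v_p(N_E)=2$, and $n=-v_p(j_E)$. Putting these three identities together produces
\[
v_p(\Delta_E)=6-v_p(j_E)=3v_p(N_E)-v_p(j_E),
\]
which is exactly the claimed bound with $\delta_p=0$.

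The case $p=2$ is more subtle and is where I expect the main difficulty to lie. Here Silverman's table no longer reads off $v_2(\Delta_E)$ cleanly, so I would instead combine Tate's original table with Ogg's formula. Tate records that the special fibre of type $I_n^*$ at $p=2$ has $m=n+5$ geometrically irreducible components, and Ogg's formula $v_2(N_E)=v_2(\Delta_E)-m+1$ then yields $v_2(\Delta_E)=v_2(N_E)+n+4$. The remaining task is to control $n$ in terms of $v_2(j_E)$: this is the wild-ramification obstruction and is the crux of the argument. I would invoke Theorem 2.8 of Lorenzini, which produces a quadratic extension $L/\Q$ together with the identity $n=-v_2(j_E)+4s$, where $s+1$ is the valuation of the different of $L_\mathfrak{P}/\Q_2$. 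Serre's bound on the different exponent of a quadratic extension of $\Q_2$ (Remark 1, p.~58 of \emph{Corps Locaux}) gives $s\le 2$, hence $n\le -v_2(j_E)+8$.

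Feeding this back into the Ogg-formula computation gives $v_2(\Delta_E)\le v_2(N_E)-v_2(j_E)+12$, and the final step is to absorb the constants into $3v_2(N_E)$ using the additive-reduction inequality $v_2(N_E)\ge 2$; indeed $3v_2(N_E)=v_2(N_E)+2v_2(N_E)\ge v_2(N_E)+4$, so that $v_2(N_E)-v_2(j_E)+12\le 3v_2(N_E)-v_2(j_E)+8$, matching $\delta_2=8$. The hard part of the whole argument is the input from Lorenzini plus Serre that bounds the wild contribution to $n$; once this is in hand, the computation is bookkeeping with Tate's algorithm and Ogg's formula.
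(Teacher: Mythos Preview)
Your proposal is correct and follows essentially the same route as the paper: split into $p\ge 3$ (read off Silverman's Table~4.1 for $I_n^*$) and $p=2$ (combine Tate's component count $m=n+5$ with Ogg's formula, then bound $n$ via Lorenzini's Theorem~2.8 and Serre's bound $s\le 2$ on the different). Even the final absorption of constants using $v_2(N_E)\ge 2$ matches the paper exactly.
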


From these three lemmas we deduce the following result, which can be of independent interest:
\begin{corollary}\label{CoroMain2} Let $E$ be an elliptic curve over $\Q$. Then $\Delta_E$ divides $16\cdot \den(j_E)N_E^5$.
\end{corollary}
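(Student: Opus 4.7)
The plan is to verify the divisibility prime by prime. Since the primes dividing $\Delta_E$ coincide with those dividing $N_E$, for any prime $p \nmid \Delta_E$ both sides contribute nothing, so I only need to show that
$$v_p(\Delta_E) \le v_p(16) + v_p(\den(j_E)) + 5 v_p(N_E)$$
for every prime $p$ dividing $\Delta_E$. I would then case-split on the three types of primes introduced before the three lemmas, using $v_p(\den(j_E)) = \max\{0,-v_p(j_E)\}$.

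For Type $1$ primes ($v_p(j_E) \ge 0$), the denominator contribution vanishes, and the bound reduces to the Pesenti--Szpiro inequality $v_p(\Delta_E) \le 5 v_p(N_E)$. For Type $2$ primes, multiplicative reduction gives $v_p(N_E) = 1$ and Lemma for Type $2$ gives $v_p(\Delta_E) = -v_p(j_E) = v_p(\den(j_E))$, which is dominated trivially by the right-hand side. For Type $3$ primes with $p \ge 3$, the contribution of $16$ is zero and the Type $3$ lemma gives $v_p(\Delta_E) \le 3 v_p(N_E) - v_p(j_E) \le 5 v_p(N_E) + v_p(\den(j_E))$.

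The one case requiring extra care is $p = 2$ of Type $3$. Here the Type $3$ lemma yields $v_2(\Delta_E) \le 3 v_2(N_E) + v_2(\den(j_E)) + 8$, whereas the target bound has $v_2(16) + 5 v_2(N_E) + v_2(\den(j_E)) = 4 + 5 v_2(N_E) + v_2(\den(j_E))$ on the right. The inequality therefore follows provided $3 v_2(N_E) + 8 \le 4 + 5 v_2(N_E)$, i.e.\ $v_2(N_E) \ge 2$, and this last bound is guaranteed by additive reduction. This is precisely where the factor $16$ in the statement is needed (it absorbs the $\delta_2 = 8$ from the Type $3$ analysis at $p=2$ after being redistributed against the extra two powers of $N_E$ available from additive reduction).

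I do not foresee any real obstacle beyond the bookkeeping in the $p=2$ case: once the three lemmas are in hand the corollary is a clean case analysis, and the mild arithmetic at $p=2$ is exactly what forces the constant $16$ in the statement.
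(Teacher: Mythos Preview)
Your argument is correct and is exactly the deduction the paper intends: the corollary is stated immediately after the three lemmas with the remark that it follows from them, and your prime-by-prime case split (with the $p=2$ bookkeeping using $v_2(N_E)\ge 2$) is precisely how that deduction goes.
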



\section{Proof of the Main Result} 

\begin{proof}[Proof of Theorem \ref{ThmMain}] By Corollary \ref{CoroMain1} we have
$$
\den(j_E)\le A(\log \num(j_E))^B\le Ah(j_E)^B\le A\cdot 16^B N_E^B(\log N_E)^B.
$$
Putting this estimate together with Corollary \ref{CoroMain2} we find $\Delta_E\le A\cdot 16^{B+1} N_E^{B+5}(\log N_E)^B$. 
\end{proof}


\section{Acknowledgments}

Supported by ANID Fondecyt Regular grant 1230507 from Chile. I thank Joseph Silverman for answering a question on Tate's algorithm and for useful comments on an earlier version of this note. I also thank Natalia Garcia-Fritz and Fabien Pazuki for suggesting some improvements in the presentation.


\end{document}